\newtheorem{Thm}{Theorem}[section]
\newtheorem{Lemma}[Thm]{Lemma}
\newtheorem{Prop}[Thm]{Proposition}
\theoremstyle{definition}
\newtheorem{Rmk}[Thm]{Remark}
\newtheorem*{notation}{Notation}
\numberwithin{equation}{section}
\def\bbc{\mathbb{C}}
\def\bbn{\mathbb{N}}
\def\bbz{\mathbb{Z}}
\def\calH{\mathcal{H}}
\def\lra{\longrightarrow}
\def\x{\times}
\def\aut{\mathrm{Aut}}
\def\id{\mathrm{id}}
\def\endo{\mathrm{Endo}}
\def\Sol{\mathrm{Sol}}
\def\inn{\mathrm{Inn}}
\def\GR{\mathrm{GR}}
\def\Sym{\mathsf{Sym}}
\def\FSym{\mathsf{FSym}}
\def\FAlt{\mathsf{FAlt}}
\def\res{\mathrm{res}}
\def\supp{\mathrm{supp}}
\def\boxit#1{\vbox{\hrule\hbox{\vrule\kern3pt
     \vbox{\kern3pt#1\kern3pt}\kern3pt\vrule}\hrule}}
\def\sqbox#1{
\gdef\boxcont{\rm #1}
  \setbox4=\vbox{\hsize 30pc \noindent \strut \boxcont\strut}
  \par\centerline{\boxit{\box4}}\par}
\def\colr#1{\textcolor{red}{#1}}
\begin{document}
\title{The $R_{\infty}$ property for Houghton's groups}
\author{Jang Hyun Jo}
\author{Jong Bum Lee}
\address{Department of Mathematics, Sogang University, Seoul 121-742, KOREA}
\email{jhjo@sogang.ac.kr}
%\author{Jong Bum Lee}
%\address{Department of Mathematics, Sogang University, Seoul 121-742, KOREA}
\email{jlee@sogang.ac.kr}

\author{Sang Rae Lee}
\address
{Department of Mathematics, Texas A\&M University, College Station, Texas 77843, USA}
\email{srlee@math.tamu.edu}

%  Math Subject Classifications
\subjclass[2000]{20E45, 20E36, 55M20}%
\keywords{Houghton's group, $R_\infty$ property, Reidemeister number}

\begin{abstract}
We study twisted conjugacy classes of a family of groups which are called Houghton's groups $\calH_n$ ($n \in\bbn$), the group of translations of $n$ rays of discrete points at infinity.
We prove that the Houghton's groups $\calH_n$ have the $R_\infty$ property for all $n\in \bbn$.
%The result follows from the standard facts about Houghton's groups
%and some elementary properties of the Reidemeister numbers.
\end{abstract}
\date{\today}
\maketitle

%\tableofcontents

%%%%%%%%%%%%%%%%%%%%%%%%%%%%%%%%%%%%%%%%%%%%%%%%%%%%%

\section{Introduction}\label{sec:introd}

Let $G$ be a group and $\varphi:G\to G$ be a group endomorphism.
We define an equivalence relation $\sim$ on $G$, called the Reidemeister action by $\varphi$, by
$$
a\sim b \Leftrightarrow b=ha\varphi(h)^{-1} \text{ for some }h\in G.
$$
The equivalence classes are called \emph{twisted conjugacy classes} or \emph{Reidemeister classes} and $R[\varphi]$ denotes the set of twisted conjugacy classes.
The \emph{Reidemeister number} $R(\varphi)$ of $\varphi$ is defined to be the cardinality of $R[\varphi]$.
We say that $G$ has the \emph{$R_\infty$ property} if $R(\varphi)=\infty$ for every automorphism $\varphi:G\to G$.

In 1994, Fel'shtyn and Hill \cite{FH} conjectured that any injective endomorphism $\varphi$ of a finitely generated group $G$ with exponential growth would have infinite Reidemeister number. Levitt and Lustig (\cite{LL}), and Fel'shtyn (\cite{F}) showed that the conjecture holds for
automorphisms when $G$ is Gromov hyperbolic. However, in 2003, the conjecture was answered negatively by Gon\c{c}alves and Wong \cite{GW1} who gave examples of groups which do not have the $R_\infty$ property.
Since then, groups with the $R_{\infty}$ property have been known including Baumslag-Solitar groups, lamplighter groups, Thompson's group $F$, Grigorchuk group, mapping class groups, relatively hyperbolic groups, and some linear groups (see \cite{BFG, DG, FG, GK, GS1,GS2, GS3, GW2, HL, KW, STW} and references therein).
For a topological consequence of the $R_{\infty}$ property, see \cite{GW2, KW, STW}.
In this article we show the following.

\begin{Thm}\label{thm:main}
The Houghton's groups $\calH_n$ have the $R_\infty$ property for all $n\in \bbn$.
\end{Thm}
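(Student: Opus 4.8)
The plan is to compute $R(\varphi)$, for an arbitrary $\varphi\in\aut(\calH_n)$, by reducing it to a counting problem about ordinary conjugacy classes and then separating infinitely many of those by a cycle-type invariant. Write $X_n$ for the set of $n$ disjoint rays on which $\calH_n$ acts, and let $\pi\colon\calH_n\to\bbz^{n-1}$ be the homomorphism recording the net translation length along each ray (its image is the sum-zero sublattice of $\bbz^n$). Its kernel is the finitary symmetric group $\FSym(X_n)$, which is exactly the set of torsion elements of $\calH_n$ and hence characteristic. One could hope to use only the induced automorphism $\bar\varphi$ on $\bbz^{n-1}$: I expect $\bar\varphi$ to be the restriction of a coordinate permutation $\sigma\in\frakS_n$ of the rays, and a determinant computation then shows $\det(I-\bar\varphi)=0$, so $R(\bar\varphi)=\infty$ and $R(\varphi)=\infty$, \emph{unless} $\sigma$ is a single $n$-cycle. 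Since the sum-zero lattice has no nonzero $\sigma$-fixed vector precisely when $\sigma$ is an $n$-cycle, this quotient argument alone cannot settle every automorphism, so I would instead set up a uniform argument valid for all $\varphi$.

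The key structural step is to show that every automorphism of $\calH_n$ is realized by conjugation inside the ambient group $\Sym(X_n)$. Since $\FSym(X_n)$ is characteristic, $\varphi$ restricts to an automorphism of it; by the classical fact $\aut(\FSym(X_n))\cong\Sym(X_n)$, this restriction is conjugation $c_\rho$ by some $\rho\in\Sym(X_n)$. Using that the centralizer of $\FSym(X_n)$ in $\Sym(X_n)$ is trivial, one checks that for every $g\in\calH_n$ the element $g^{-1}\rho^{-1}\varphi(g)\rho$ centralizes $\FSym(X_n)$, hence equals the identity; thus $\varphi=c_\rho$ on all of $\calH_n$ and $\rho$ normalizes $\calH_n$. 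With $\varphi=c_\rho$, the defining relation $b=ha\varphi(h)^{-1}=ha\rho h^{-1}\rho^{-1}$ becomes $b\rho=h(a\rho)h^{-1}$ after right multiplication by $\rho$. Hence $a\mapsto a\rho$ identifies the Reidemeister classes of $\varphi$ with the $\calH_n$-conjugacy classes contained in the coset $\calH_n\rho\subseteq\Sym(X_n)$, and it suffices to produce infinitely many of these.

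To count them I would use that $\calH_n$-conjugate elements are $\Sym(X_n)$-conjugate, so for each finite $k$ the number $c_k(g)$ of $k$-cycles of $g$ is constant on each class; thus it is enough to realize infinitely many distinct finite-cycle profiles inside $\calH_n\rho$. I would first argue that any $\rho$ normalizing $\calH_n$ is, outside a finite subset of $X_n$, a translation composed with a ray permutation; consequently $\rho$ has only finitely many finite cycles, and (unless $\rho$ is finitely supported, in which case $\calH_n\rho=\calH_n$ and the single $k$-cycles already give infinitely many ordinary conjugacy classes) it possesses a bi-infinite orbit $O$. Choosing points $x_0,x_k\in O$ along this orbit, the elements $g_k=(x_0\,x_k)\,\rho\in\FSym(X_n)\rho\subseteq\calH_n\rho$ split a single $k$-cycle off $O$ while leaving every other cycle intact, so $c_k(g_k)=c_k(\rho)+1$ and $c_j(g_k)=c_j(\rho)$ for $j\ne k$. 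As $c_k(\rho)<\infty$, the $g_k$ have pairwise distinct cycle profiles and so lie in pairwise distinct classes, giving $R(\varphi)=\infty$.

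The main obstacle is exactly the structural input that automorphisms are geometric in this sense: on the one hand the identification $\aut(\calH_n)\cong N_{\Sym(X_n)}(\calH_n)$ via the characteristicity of $\FSym(X_n)$ and the triviality of its centralizer, and on the other hand the control on the orbit structure at infinity of normalizer elements (that a permutation scrambling points arbitrarily far out cannot preserve the exact-translation condition defining $\calH_n$, forcing finitely many finite cycles and a bi-infinite orbit). Once this is in place, the twisted-conjugacy bookkeeping and the cycle-type count are routine, and they dispose of the troublesome $n$-cycle case together with all the others uniformly.
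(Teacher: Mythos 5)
Your overall route is close in spirit to the paper's: both use the identification $\aut(\calH_n)\cong N_{\Sym_n}(\calH_n)=\calH_n\rtimes\Sigma_n$, translate twisted conjugacy under $\mu(\rho)$ into ordinary conjugacy inside the coset $\calH_n\rho$, and then separate classes by cycle-structure invariants. The reduction steps (characteristicity of $\FSym_n$, triviality of its centralizer, $b=ha\varphi(h)^{-1}\Leftrightarrow b\rho=h(a\rho)h^{-1}$, and constancy of the finite-cycle counts $c_k$ on conjugacy classes) are all correct.

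However, there is a genuine gap in the counting step, and it occurs exactly in the case you yourself flagged as the one the abelianization argument cannot handle. Write $\rho=g\sigma$ with $g\in\calH_n$ having translation vector $(m_1,\dots,m_n)$, $\sum_i m_i=0$, and $\sigma\in\Sigma_n$. Your claim that a non-finitely-supported $\rho$ in the normalizer has only finitely many finite cycles and possesses a bi-infinite orbit is false: a far-out point on a ray belonging to an $\ell$-cycle of $\sigma$ returns to itself after $\ell$ steps translated by the net sum of the $m_i$ over that ray-cycle, so bi-infinite orbits exist only when some ray-cycle of $\sigma$ carries a nonzero net translation. When $\sigma$ is a single $n$-cycle the unique ray-cycle has net translation $\sum_i m_i=0$, so \emph{every} sufficiently distant point lies on a finite $\rho$-orbit of length $n$: $\rho$ has infinitely many $n$-cycles, no infinite orbit, and $c_n(\rho)=\infty$. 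The elements $g_k=(x_0\,x_k)\rho$ cannot be formed and the profile comparison $c_k(g_k)=c_k(\rho)+1$ breaks down. (The same degeneration occurs for $\sigma$ with all ray-cycles of zero net translation, and for $n=1$, where the normalizer is all of $\Sym_1$ and $\rho$ may have $c_k(\rho)=\infty$ for every $k$.) The strategy is salvageable --- for instance, in the $n$-cycle case one can choose $j$ disjoint transpositions each joining two distinct ``regular'' $n$-cycles of $\rho$, producing elements of $\FSym_n\rho$ with exactly $c_{2n}(\rho)+j$ cycles of length $2n$, where $c_{2n}(\rho)<\infty$; this yields infinitely many distinct profiles. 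That repair is in effect what the paper's Lemma~\ref{lemma:sigma} accomplishes: it shows directly that for a ray-cycle $\sigma$ two finite cycles on a fixed ray are $\mu(\sigma)$-twisted-conjugate if and only if they have equal length, which disposes of the $n$-cycle case (and, via the product bijection of Remark~\ref{rmk:bijection}, of all others). As written, though, your proof fails precisely in the crucial case.
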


It is shown that the conjugacy problem(\cite{ABM}) and the twisted conjugacy problem(\cite{C}) of $\calH_n$ are solvable for $n \geq 2$. In 2010, Gon\c{c}alves and Kochloukova \cite{GK} proved that there is a finite index subgroup $H$ of $\mathrm{Aut}(\calH_n)$ such that $R(\varphi) = \infty$ for $\varphi \in H$ provided $n\geq 2$. Recently the structure of $\aut(\calH_n)$ is known from \cite{BCMR} (see Theorem~\ref{auto} below). In \cite{GS3}, Gon\c{c}alves and Sankaran have studied also the $R_\infty$ property of Houghton's groups.

In this paper we use simple but useful observations of the Reidmeister numbers and the structure of $\aut(\calH_n)$ to find equivalent conditions for two elements of $\calH_n$ to determine the same twisted conjugacy class under mild assumptions.
In Section~\ref{sec:Houghton}, we will review definition and some facts about Houghton's groups $\calH_n$
which are necessary mainly to the study of Reidemeister numbers for $\calH_n$.
In Section~\ref{sec:R number}, we prove our main result for $n\geq 2$. The case of $n=1$ is discussed in Section~\ref{sec:H_1}.

\section{Houghton's groups $\calH_n$}\label{sec:Houghton}
In this paper we use the following notational conventions. All bijections (or permutations) act on the right unless otherwise specified. Consequently $gh$ means $g$ followed by $h$. The conjugation by $g$ is denoted by $\mu(g)$, $h^g = g^{-1}hg = :\mu(g)(h)$, and the commutator is defined by $[g,h] = ghg^{-1}h^{-1}$.

Our basic references are \cite{H,SR} for Houghton's groups and \cite{BCMR} for their automorphism groups.
Fix an integer $n\ge1$. For each $k$ with $1\le k\le n$, let
$$
R_k=\left\{me^{i\theta}\in\bbc\mid m\in\bbn,\ \theta=\tfrac{\pi}{2}+(k-1)\tfrac{2\pi}{n} \right\}
$$
and let $X_n=\bigcup_{k=1}^n R_k$ be the disjoint union of $n$ copies of $\bbn$,
each arranged along a ray emanating from the origin in the plane.
We shall use the notation $\{1,\cdots,n\}\x\bbn$ for $X_n$,
letting $(k,p)$ denote the point of $R_k$ with distance $p$ from the origin.

A bijection $g:X_n\to X_n$ is called an \emph{eventual translation}
if the following holds:
\begin{quote}
There exist an $n$-tuple $(m_1,\cdots,m_n)\in\bbz^n$ and a finite set $K_g\subset X_n$ such that
$$
(k,p)\cdot g:=(k,p+m_k)\quad \forall (k,p)\in X_n-K_g.
$$
\end{quote}
An eventual translation acts as a translation on each ray outside a finite set.
For each $n \in \bbn$ the \emph{Houghton's group} $\calH_n$ is defined to be the group of all eventual translations of $X_n$.

\begin{comment}
Here are some known results for the Houghton's groups $\calH_n$:
\begin{itemize}
\item K.~S.~Brown showed that $\calH_n$ has type FP$_{n-1}$ but not FP$_n$ and
also showed that $\calH_n$ is finitely presented provided $n\ge 3$.
\item By \cite[Theorem~2.18]{SR}, $\calH_n$ is an amenable group.
\item R\"{o}ver \cite{Rover} showed that for all $n\ge1, r\ge2,m\ge1$,
$\calH_n$ embeds in Higman's groups $G_{r,m}$ (defined in \cite{Hig}), and in particular all Houghton's groups are subgroups of Thompson's group $V$.
\item The fact is every normal subgroup of $\calH_n$ contains $\FAlt_n$, which is simple.
This says $\calH_n$ is not residually finite.
The fact also implies that $\calH_n$ is Hopfian.
($\calH_n$ is not isomorphic to any of proper quotient. If a proper quotient has an element of finite order it must be 2 while $\calH_n$ already has infinite torsion.)
\item $\calH_n$ is not co-Hopfian. A map sending $g_i$ to $g_i^2$ is an injective homomorphism but not an isomorphism.
\end{itemize}
\end{comment}

Let $g_i$ be the translation on the ray of $R_1\cup R_{i+1}$ by $1$ for $1\le i\le n-1$.
Namely,
$$
(j,p)\cdot g_i=\begin{cases}
(1,p-1)&\text{if $j=1$ and $p\ge2$,}\\
(i+1,1)&\text{if $(j,p)=(1,1)$,}\\
(i+1,p+1)&\text{if $j=i+1$,}\\
(j,p)&\text{otherwise.}\end{cases}
$$

\begin{figure}[h]\label{fig:example}
    \includegraphics[width=1.0\textwidth]{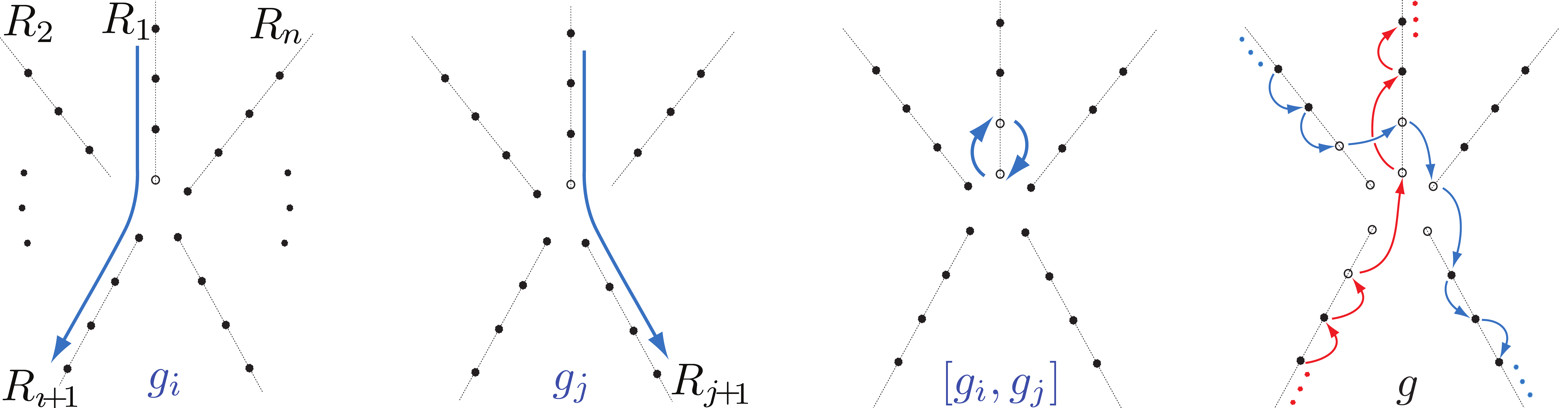}
    \caption{Some examples of $\calH_n$}.
\end{figure}

Figure \ref{fig:example} illustrates some examples of elements of $\calH_n$, where points which do not involve arrows are meant to be fixed. %, and points of each finite set $K$ are indicated by circles.
Finite sets $K_{g_i}$
and $K_{g_j}$ are singleton sets. The commutator $[g_i,g_j]$ of two distinct elements $g_i$ and $g_j$ is the transposition exchanging $(1,1)$ and $(1,2)$.
We will denote this transposition by $\alpha$. The last
element $g$ is rather generic and $K_g$ consists of eight points.
%With the notation $g^x=x^{-1}gx$, we have
Johnson provided a finite presentation for $\calH_3$ in \cite{J} and the third author gave a finite presentation for $\calH_n$ with $n \geq 3$ in \cite{SR} as follows:
\begin{Thm}[{\cite[Theorem~C]{SR}}]\label{C}
For $n\ge3$, $\calH_n$ is generated by $g_1,\cdots,g_{n-1},\alpha$ with relations
$$
\alpha^2=1,\
(\alpha\alpha^{g_1})^3=1,\
[\alpha,\alpha^{g_1^2}]=1,\
\alpha=[g_i,g_j],\
\alpha^{g_i^{-1}}=\alpha^{g_j^{-1}}\
$$
for $1\le i\ne j\le n-1$.
\end{Thm}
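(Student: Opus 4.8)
The plan is to run the standard presentation-of-an-extension argument, adapted to the fact that the relevant kernel is infinitely generated. First I would record the short exact sequence
$$1 \lra \FSym(X_n) \lra \calH_n \xrightarrow{\,\pi\,} \bbz^{n-1} \lra 1,$$
where $\pi$ sends an eventual translation to its translation vector $(m_1,\dots,m_n)$; since a bijection of $X_n$ forces $\sum_k m_k=0$, the image is the rank $n-1$ lattice $\{\sum m_k=0\}\cong\bbz^{n-1}$, and the kernel is exactly the group $\FSym(X_n)$ of finitely supported permutations. Under $\pi$ the generator $g_i$ maps to $e_{i+1}-e_1$, so the images of $g_1,\dots,g_{n-1}$ form a basis of $\bbz^{n-1}$, while $\alpha\in\FSym(X_n)$. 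The task then splits into (i) checking that the listed relations hold and that $g_1,\dots,g_{n-1},\alpha$ generate, and (ii) checking that the relations are sufficient, for which I would exploit the conjugation action of $\bbz^{n-1}$ on $\FSym(X_n)$ to reduce the infinitely many Coxeter relations of $\FSym(X_n)$ to the finite list given.

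For (i), generation follows because the images of the $g_i$ already generate $\bbz^{n-1}$, so it suffices to see that $\alpha$ together with the $g_i$ generate the kernel; this holds because the conjugates $\alpha^{w}$, as $w$ ranges over words in the $g_i$, realize transpositions along a connected adjacency graph on $X_n$ and hence generate all finitely supported permutations. The relations are then verified by direct computation: $\alpha$ is the transposition exchanging $(1,1)$ and $(1,2)$, so $\alpha^2=1$; the conjugates $\alpha^{g_1^{k}}$ ($k\in\bbz$) are exactly the adjacent transpositions along the bi-infinite line obtained by joining $R_1$ and $R_2$ at the origin, whence $(\alpha\alpha^{g_1})^3=1$ for two consecutive ones and $[\alpha,\alpha^{g_1^2}]=1$ for two at distance two; one finds that $\alpha^{g_i}$ exchanges $(1,1)$ and $(i+1,1)$ while $\alpha^{g_i^{-1}}$ exchanges $(1,2)$ and $(1,3)$ independently of $i$, giving $\alpha^{g_i^{-1}}=\alpha^{g_j^{-1}}$; and checking the four moved points shows $[g_i,g_j]=\alpha$.

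For (ii), let $G$ be the abstract group with the given presentation and $\Phi\colon G\to\calH_n$ the resulting surjection. I would first show $G/\llangle\alpha\rrangle\cong\bbz^{n-1}$: modulo the normal closure of $\alpha$ the relations $\alpha=[g_i,g_j]$ become $[g_i,g_j]=1$, so $G/\llangle\alpha\rrangle$ is a quotient of $\bbz^{n-1}$, and it surjects onto $\bbz^{n-1}$ through the map induced by $\pi\circ\Phi$, forcing equality. It then remains to prove that $\Phi$ restricts to an isomorphism $\llangle\alpha\rrangle\to\FSym(X_n)$. This is the heart of the argument: one must show that the given relations, transported by conjugation by arbitrary words in the $g_i$, already imply the complete Coxeter presentation of $\FSym(X_n)$ on its generating set of adjacent transpositions. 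The involution relations and the ray-interior braid and disjointness relations follow by conjugating $\alpha^2=1$, $(\alpha\alpha^{g_1})^3=1$ and $[\alpha,\alpha^{g_1^2}]=1$; the relations at the origin, where all $n$ rays meet, together with the analogues of the $g_1$-relations for the remaining $g_i$, are where $\alpha=[g_i,g_j]$ and $\alpha^{g_i^{-1}}=\alpha^{g_j^{-1}}$ are used to reconcile the local pictures on different rays.

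I expect the main obstacle to be exactly this last point: verifying that the finite list is \emph{complete}, i.e.\ that no Coxeter relation of $\FSym(X_n)$ is lost. Concretely, the difficulty is combinatorial bookkeeping near the origin — one has to derive $(\alpha\alpha^{g_i})^3=1$ and $[\alpha,\alpha^{g_i^2}]=1$ for $i\neq1$, together with all cross-ray braid and commuting relations among the base transpositions exchanging $(1,1)$ with $(i+1,1)$, from the symmetry relations $\alpha=[g_i,g_j]$ and $\alpha^{g_i^{-1}}=\alpha^{g_j^{-1}}$. Once these are established, a direct-limit argument writing $\FSym(X_n)=\varinjlim\FSym_k$ and matching presentations on each finite stage shows that $\Phi|_{\llangle\alpha\rrangle}$ is injective, and the short five lemma applied to the comparison of the two extensions completes the proof.
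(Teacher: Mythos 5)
First, a caveat: the paper does not prove this statement at all --- it is imported verbatim as \cite[Theorem~C]{SR} --- so there is no in-paper argument to compare yours against; I can only assess your proposal on its own terms. Your overall architecture (the extension $1\to\FSym(X_n)\to\calH_n\to\bbz^{n-1}\to1$, verification of the relations and of generation, then showing that the normal closure $\llangle\alpha\rrangle$ in the abstract group maps isomorphically onto $\FSym(X_n)$ by recovering its infinite Coxeter presentation from the finite list) is the right strategy and is essentially how the cited source proceeds. Your part (i) is correct: the computations of $\alpha=[g_i,g_j]$, $\alpha^{g_i}=((1,1)\,(i{+}1,1))$ and $\alpha^{g_i^{-1}}=((1,2)\,(1,3))$ all check out under the paper's right-action conventions.

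The genuine gap is in part (ii), and it is not only that you defer the ``near-origin'' bookkeeping: one of the steps you assert as routine is false as stated. You claim that ``the ray-interior braid and disjointness relations follow by conjugating $\alpha^2=1$, $(\alpha\alpha^{g_1})^3=1$ and $[\alpha,\alpha^{g_1^2}]=1$.'' Conjugating by powers of $g_1$ yields only the commuting relations $[\alpha^{g_1^k},\alpha^{g_1^{k+2}}]=1$ at distance exactly two along the line $R_1\cup R_2$; the relations $[\alpha^{g_1^k},\alpha^{g_1^m}]=1$ for $|k-m|\ge 3$ are \emph{not} consequences of these inside the subgroup generated by the $\alpha^{g_1^k}$ --- in a Coxeter presentation of an (infinite) symmetric group none of the commuting relations may be omitted, which is exactly why $\calH_2$, where only $g_1$ is available, fails to be finitely presented and must be given the relations $[\alpha,\alpha^{g_1^k}]=1$ for all $|k|>1$. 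For $n\ge3$ these long-range commutations must be manufactured from the cross-ray relations $\alpha=[g_i,g_j]$ and $\alpha^{g_i^{-1}}=\alpha^{g_j^{-1}}$ (e.g.\ by using a second generator $g_j$ to move one transposition onto a different ray and back), and this derivation --- together with the relations $(\alpha\alpha^{g_i})^3=1$, $[\alpha,\alpha^{g_i^2}]=1$ for $i\ne1$ and the cross-ray braid/commutation relations at the origin that you explicitly postpone --- is the actual mathematical content of the theorem. As written, the proposal identifies the correct target presentation of $\FSym(X_n)$ but does not supply the reduction to the finite relation set, and the one reduction it does claim in passing is precisely the one that cannot be done the way you describe.
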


From the definition of Houghton's groups, the assignment $g\in\calH_n\mapsto (m_1,\cdots,m_n)\in\bbz^n$
defines a homomorphism $\pi=(\pi_1,\cdots,\pi_n):\calH_n\to\bbz^n$. Then we have:

\begin{Lemma}[{\cite[Lemma~2.3]{SR}}]
For $n\ge3$, we have $\ker\pi%=\Sigma_{n,\infty}
=[\calH_n,\calH_n]$.
\end{Lemma}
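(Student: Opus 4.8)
The plan is to prove the two inclusions separately. The inclusion $[\calH_n,\calH_n]\subseteq\ker\pi$ is immediate: since $\pi:\calH_n\to\bbz^n$ is a homomorphism into an abelian group, every commutator lies in its kernel. The substance of the lemma is the reverse inclusion $\ker\pi\subseteq[\calH_n,\calH_n]$.

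First I would identify $\ker\pi$ concretely. An element $g\in\calH_n$ lies in $\ker\pi$ precisely when its translation vector is $(0,\dots,0)$, which by the definition of an eventual translation means $(k,p)\cdot g=(k,p)$ for all $(k,p)\in X_n-K_g$. Hence such a $g$ is a bijection of $X_n$ with finite support, so $\ker\pi=\FSym(X_n)$, the group of finitely supported permutations of $X_n$. Conversely, every finitely supported permutation is an eventual translation with zero translation vector, so $\FSym(X_n)$ is genuinely a subgroup of $\calH_n$.

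Because $\FSym(X_n)\subseteq\calH_n$ and the finitary symmetric group of an infinite set acts $2$-transitively (indeed highly transitively) on that set, the action of $\calH_n$ on $X_n$ is $2$-transitive. I would then exploit the relation $\alpha=[g_i,g_j]$ recorded above, where $\alpha$ is the transposition exchanging $(1,1)$ and $(1,2)$: this shows $\alpha\in[\calH_n,\calH_n]$. Since $[\calH_n,\calH_n]$ is normal in $\calH_n$, it contains every conjugate $\alpha^g=g^{-1}\alpha g$, and with the right-action convention this conjugate is the transposition exchanging $(1,1)\cdot g$ and $(1,2)\cdot g$. By $2$-transitivity, as $g$ ranges over $\calH_n$ the pair $\bigl((1,1)\cdot g,(1,2)\cdot g\bigr)$ ranges over all pairs of distinct points of $X_n$, so every transposition of $X_n$ lies in $[\calH_n,\calH_n]$. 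As transpositions generate $\FSym(X_n)$, we obtain $\ker\pi=\FSym(X_n)\subseteq[\calH_n,\calH_n]$, which completes the proof.

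The step I expect to require the most care is the use of the relation $\alpha=[g_i,g_j]$. It is essential that $\alpha$, an \emph{odd} permutation, already lies in the commutator subgroup; without this one could only conclude $\FAlt(X_n)\subseteq[\calH_n,\calH_n]$ by forming products of pairs of transpositions, which would miss half of $\FSym(X_n)$. This is also where the hypothesis $n\ge3$ enters, since one needs two distinct generators $g_i,g_j$ for the commutator relation to be available.
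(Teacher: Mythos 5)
Your proof is correct and follows the standard argument for this lemma (which the paper itself only cites from \cite{SR} rather than proving): identify $\ker\pi$ with $\FSym_n$, observe that the relation $\alpha=[g_i,g_j]$ puts an odd permutation into the commutator subgroup, and use transitivity plus normality to sweep up all transpositions. Your closing remark correctly isolates the two points where care is needed --- that $\alpha$ being a single transposition (rather than a product of two) is what yields all of $\FSym_n$ instead of only $\FAlt_n$, and that $n\ge3$ is exactly what makes two distinct generators $g_i,g_j$ available.
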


Note that $\pi(g_i)\in\bbz^n$ has only two nonzero values $-1$ and $1$,
$$
\pi(g_i)=(-1,0,\cdots,0,1,0,\cdots,0)
$$
where $1$ occurs in the $(i+1)$st component.
Since the image of $\calH_n$ under $\pi$ is generated by those elements,
we have that
$$
\pi(\calH_n)=\left\{(m_1,\cdots,m_n)\in\bbz^n\mid \sum_{i=1}^n m_i=0\right\},
$$
which is isomorphic to the free Abelian group of rank $n-1$.
Consequently, $\calH_n$ ($n\ge3$) fits in the following short exact sequence
$$
1\lra\calH_n'=[\calH_n,\calH_n]\lra\calH_n\buildrel{\pi}\over\lra\bbz^{n-1}\lra1.
$$
The above abelianization, first observed by C. H. Houghton in \cite{H}, is the characteristic property of $\{\calH_n\}$ for which he introduced those groups in the same paper.
We may regard $\pi$ as a homomorphism $\calH_n\to\bbz^n\to\bbz^{n-1}$ given by
$$
\pi:g_i\mapsto(-1,0,\cdots,0,1,0,\cdots,0)\mapsto(0,\cdots,0,1,0,\cdots,0).
$$
In particular, $\pi(g_1),\cdots,\pi(g_{n-1})$ form a set of free generators for $\bbz^{n-1}$.

As definition, $\calH_1$ is the symmetric group itself on $X_1$ with finite support,
which is not finitely generated.
Furthermore, $\calH_2$ is
$$
\calH_2=\langle g_1,\alpha \mid \alpha^2=1, (\alpha\alpha^{g_1})^3=1,
[\alpha,\alpha^{g_1^k}]=1 \text{ for all } |k|>1\rangle,
$$
which is finitely generated, but not finitely presented. It is not difficult to see that $\calH_2'=\FAlt_2$.

\begin{notation}
\begin{align*}
&\Sym_n= \text{ the full symmetric group of $X_n$,}\\
&\FSym_n= \text{ the symmetric group of $X_n$ with finite support,}\\
&\FAlt_n= \text{ the alternating group of $X_n$ with finite support.}
\end{align*}
\end{notation}
\noindent

%\begin{enumerate}
%\item $\calH_n< \Sym_n$
%\item $\calH_1=\FSym_1$, $\calH_1'=[\FSym_1,\FSym_1]=\FAlt_1$
%\item $\calH_2'=\FAlt_2$
%\item $\calH_n'=\FSym_n$ for $n\ge3$
%\item $\calH_n''=[\FSym_n,\FSym_n]=\FAlt_n$
%\item $\aut(\FAlt_n)=\aut(\FSym_n)=\Sym_n$
%\item $\calH_n'''=[\FAlt_n,\FAlt_n]=\FAlt_n$ as $\FAlt_n$ is a perfect group
%\item has $\FAlt_n$.
%\end{enumerate}

For each $n$ the group $\FAlt_n$ can be seen as the kernel of the sign homomorphism $\FSym_n\to\{\pm1\}$. The following fact is necessary for our discussion, see  \cite{DM}.
\begin{Rmk}\label{prop:conjugation}
For any $\sigma\in\Sym_n$, the conjugation by $\sigma$ induces automorphisms
$\mu(\sigma):\FSym_n\to\FSym_n$ and $\mu(\sigma):\FAlt_n\to\FAlt_n$.
Then $\mu:\Sym_n\to\aut(\FAlt_n)$ and $\mu:\Sym_n\to\aut(\FSym_n)$ are isomorphisms.
\end{Rmk}

Every automorphism of $\calH_n$ restricts to an automorphism of the characteristic subgroup $\calH_n''=[\FSym_{n},\FSym_{n}]=\FAlt_n$, which induces a homomorphism $\res: \aut(\calH_n)\to\aut(\FAlt_n)$. One can show this map is injective by using the fact that $\FAlt_n$ is generated by $3$-cycles. The embedding $$
\mathrm{Res}:\aut(\calH_n)\buildrel\res\over\lra \aut(\FAlt_n)\buildrel\mu^{-1}\over\lra \Sym_n
$$ implies that each automorphism of $\calH_n$ is given by a conjugation of an element in $\Sym_n$. Moreover the composition preserves the normality $\calH_n = \inn(\calH_n) \lhd \aut(\calH_n)$.

\begin{Prop}[{\cite[Proposition~2.1]{BCMR}}]
For $n\ge1$, the automorphism group $\aut(\calH_n)$ is isomorphic to
the normalizer of $\calH_n$ in the group $\Sym_n$.
\end{Prop}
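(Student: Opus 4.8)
The plan is to realize the isomorphism concretely as conjugation. Write $N=N_{\Sym_n}(\calH_n)$ for the normalizer, and define a map $\Phi\colon N\to\aut(\calH_n)$ by sending $\sigma$ to the restriction $\mu(\sigma)|_{\calH_n}$ of the conjugation $\mu(\sigma)\colon h\mapsto\sigma^{-1}h\sigma$. This is well defined precisely because $\sigma$ normalizes $\calH_n$, and it is visibly a homomorphism. I claim $\Phi$ is the desired isomorphism, with inverse the embedding $\mathrm{Res}$ already constructed in the excerpt; the whole proof amounts to checking that $\Phi$ is injective and surjective.

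For injectivity, note that $\ker\Phi$ consists of the $\sigma\in N$ that act trivially by conjugation, i.e. $\ker\Phi=C_{\Sym_n}(\calH_n)$. Since $\FAlt_n\subseteq\calH_n$, this lies inside $C_{\Sym_n}(\FAlt_n)$, which I would show is trivial: if $\sigma\neq1$, pick $x$ with $x\sigma=x'\neq x$ and choose $y,z$ outside $\{x,x'\}$; then $\sigma$ conjugates the $3$-cycle $(x\,y\,z)$ to $(x\sigma\,y\sigma\,z\sigma)$, whose support contains $x'\notin\{x,y,z\}$, so $\sigma$ cannot centralize it. This uses only that $X_n$ is infinite and that $\FAlt_n$ is generated by $3$-cycles. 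Hence $C_{\Sym_n}(\FAlt_n)=1$ and $\Phi$ is injective.

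For surjectivity, take $\varphi\in\aut(\calH_n)$ and set $\sigma=\mathrm{Res}(\varphi)\in\Sym_n$, so that $\varphi(a)=\sigma^{-1}a\sigma$ for every $a\in\FAlt_n$. The main step is to promote this identity from the characteristic subgroup $\FAlt_n$ to all of $\calH_n$. For $h\in\calH_n$ and $a\in\FAlt_n$, normality gives $h^{-1}ah\in\FAlt_n$; applying $\varphi$ and the formula already known on $\FAlt_n$ yields $g^{-1}ag=h^{-1}ah$ for all such $a$, where $g:=\sigma\varphi(h)\sigma^{-1}\in\Sym_n$. Thus $gh^{-1}$ centralizes $\FAlt_n$, and the triviality of $C_{\Sym_n}(\FAlt_n)$ forces $g=h$, i.e. $\varphi(h)=\sigma^{-1}h\sigma$. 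Consequently $\sigma^{-1}\calH_n\sigma=\varphi(\calH_n)=\calH_n$, so $\sigma\in N$ and $\Phi(\sigma)=\varphi$. A direct check that $\mathrm{Res}(\Phi(\sigma))=\sigma$ and $\Phi(\mathrm{Res}(\varphi))=\varphi$ confirms that $\Phi$ and $\mathrm{Res}$ are mutually inverse, giving $\aut(\calH_n)\cong N$.

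I expect the only genuine obstacle to be the passage from $\FAlt_n$ to $\calH_n$ in the surjectivity step, where one must argue that a conjugation agreeing with $\varphi$ on the characteristic subgroup in fact agrees with $\varphi$ everywhere. This is exactly where the triviality of the centralizer of $\FAlt_n$ in $\Sym_n$ does the essential work, so establishing that centralizer fact cleanly is the crux of the argument.
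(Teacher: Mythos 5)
Your proposal is correct, and it reaches the conclusion by a route that differs from the paper's in its two key technical steps, even though the overall architecture (restrict to the characteristic subgroup $\FAlt_n$, use $\mu\colon\Sym_n\cong\aut(\FAlt_n)$, identify the image with the normalizer) is the same. Where the paper proves injectivity of $\res\colon\aut(\calH_n)\to\aut(\FAlt_n)$ by an explicit computation with the generators $g_i$ and carefully chosen $3$-cycles $(a_1a_2a_3),\dots$ on consecutive points of a ray (deducing $\varphi(g_i)=g_i$ pointwise), you isolate the single lemma $C_{\Sym_n}(\FAlt_n)=1$ and let it do all the work: it gives injectivity of $\Phi$, and, combined with normality of $\FAlt_n$ in $\calH_n$, it powers the promotion step showing that any $\varphi\in\aut(\calH_n)$ agreeing with $\mu(\sigma)$ on $\FAlt_n$ agrees with it on all of $\calH_n$ (which, specialized to $\sigma=1$, recovers the paper's injectivity claim). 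Where the paper identifies the image of $\mathrm{Res}$ with $N_{\Sym_n}(\calH_n)$ by observing $\mathrm{Res}(\inn(\calH_n))=\calH_n$ and invoking normality of $\inn(\calH_n)$ in $\aut(\calH_n)$, you instead conclude directly that $\varphi=\mu(\sigma)|_{\calH_n}$ forces $\sigma^{-1}\calH_n\sigma=\calH_n$. Your version is more self-contained and is the standard argument for groups sandwiched as $\FAlt(\Omega)\le G\le\Sym(\Omega)$; the paper's generator computation is more hands-on and yields the explicit fixed generators as a byproduct. Both arguments are sound; the one point you should state explicitly rather than borrow is that $\FAlt_n$ is characteristic (being $\calH_n''$), which justifies both the existence of $\mathrm{Res}$ and the normality used in your promotion step.
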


We need an explicit description for the normalizer $N_{\Sym_n}(\calH_n)$ to study $\aut(\calH_n)$.
Consider an element $\sigma_{ij}\in \Sym_n$ for $1\leq i\neq j\leq n$ defined by
$$
(\ell,p)\cdot\sigma_{ij}=
    \begin{cases}
        (j,p) &\text{if } \ell=i\\
        (i,p) &\text{if } \ell=j\\
        (\ell,p) &\text{otherwise}
    \end{cases}
$$ for all $p\in \bbn$. Each element $\sigma_{ij}$ defines a transposition on $n$ rays isometrically. The subgroup of $\Sym_n$ generated by all $\sigma_{ij}$ is isomorphic to the symmetric group $\Sigma_n$ on the $n$ rays. Note that $\Sigma_n$ acts on $\calH_n$ by conjugation. One can show that $N_{\Sym_n}(\calH_n)$ coincides with $\calH_n\rtimes \Sigma_n$ by using the ray structure (end structure) of the underlying set $X_n$. An eventual translation $g$ preserves each ray up to a finite set. Let $  R_i^*$ denote the set of all points of $R_i$ but finitely many. It is not difficult to see that if $\phi\in \Sym_n$ normalizes $\calH_n$ then
$$
(R_i^*)\phi= R_j^*\quad
$$
for $1 \leq i,j\leq n$. Thus $\phi$ defines an element $\sigma$ of $\Sigma_n$, and we see that $\phi \sigma^{-1} \in \calH_n$ since $(R_i^*)\phi\sigma^{-1}=(R_j^*)\sigma^{-1}=R_i^*$ for each $i$. Consequently, $N_{\Sym_n}(\calH_n)$ has the internal semidirect product of $\calH_n$ by $\Sigma_n$. Therefore we have:

\begin{Thm}[{\cite[Theorem~2.2]{BCMR}}]\label{auto}
For $n\ge2$, we have
$$
\aut(\calH_n)\cong\calH_n\rtimes \Sigma_n
$$
where $\Sigma_n$ is the symmetric group that permutes $n$ rays isometrically.
\end{Thm}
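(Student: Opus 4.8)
The plan is to invoke the preceding proposition, which already identifies $\aut(\calH_n)$ with the normalizer $N_{\Sym_n}(\calH_n)$, and then to compute that normalizer explicitly as the internal semidirect product $\calH_n\rtimes\Sigma_n$. Concretely, everything reduces to establishing four facts: that $\calH_n$ and $\Sigma_n$ are both subgroups of $N_{\Sym_n}(\calH_n)$; that $\calH_n$ is normal there; that $\calH_n\cap\Sigma_n=\{1\}$; and that $N_{\Sym_n}(\calH_n)=\calH_n\Sigma_n$. The first three are routine, so the real content lies in the last equality, and in particular in its nontrivial inclusion $N_{\Sym_n}(\calH_n)\subseteq\calH_n\Sigma_n$.

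I would first clear away the routine points. Since $\calH_n\le\Sym_n$ it lies in its own normalizer, and $\calH_n$ is automatically normal in $N_{\Sym_n}(\calH_n)$. Each generator $\sigma_{ij}$ normalizes $\calH_n$ because conjugating an eventual translation with data $(m_1,\dots,m_n)$ and exceptional set $K_g$ by the ray-isometry $\sigma_{ij}$ yields an eventual translation whose data is obtained by transposing the $i$-th and $j$-th entries and whose exceptional set is $K_g\sigma_{ij}$; hence $\Sigma_n\le N_{\Sym_n}(\calH_n)$ and $\calH_n\Sigma_n\subseteq N_{\Sym_n}(\calH_n)$. Finally $\calH_n\cap\Sigma_n=\{1\}$, since a nontrivial element of $\Sigma_n$ carries a whole ray isometrically onto a different ray and so moves infinitely many points off their own ray, whereas every eventual translation fixes each ray outside a finite set.

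The heart of the matter is the reverse inclusion, which I would prove by showing that any $\phi\in\Sym_n$ normalizing $\calH_n$ must respect the ray (end) structure of $X_n$ up to finite sets. The cleanest route is to characterize the rays intrinsically from the group action: call a subset $A\subseteq X_n$ almost invariant if $A\triangle Ag$ is finite for every $g\in\calH_n$, and pass to the Boolean algebra of such sets modulo finite sets. Because an eventual translation shifts each ray and disturbs only finitely many other points, each $R_i$ is almost invariant; conversely, restricting an almost invariant $A$ to a ray and using a translation with $\pi_i\neq0$ shows $A\cap R_i$ is finite or cofinite in $R_i$, so $A$ is, modulo finite sets, a union of rays. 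Thus for $n\ge2$ the atoms of this algebra are exactly the classes $[R_i]$. Since conjugation by $\phi$ sends $\calH_n$ to itself, $\phi$ maps almost invariant sets to almost invariant sets and commutes with reduction modulo finite sets, hence permutes the atoms: this is precisely the statement $(R_i^*)\phi=R_j^*$, and it produces a permutation of the rays, i.e. an element $\sigma\in\Sigma_n$.

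It then remains to see that $\psi:=\phi\sigma^{-1}$, which now normalizes $\calH_n$ and fixes every ray outside a finite set, is itself an eventual translation, so that $\phi=\psi\sigma\in\calH_n\Sigma_n$. Here I would use the normalizing property once more: for $g\in\calH_n$ acting as the shift by $1$ on $R_i$ outside a finite set, $\psi^{-1}g\psi$ lies in $\calH_n$ and hence acts as a translation on $R_i$ outside a finite set; running this over the ray-shifts forces $\psi$ to act as an honest translation on each ray cofinitely, which is the definition of an eventual translation. Assembling the four facts gives $N_{\Sym_n}(\calH_n)=\calH_n\rtimes\Sigma_n$ with $\Sigma_n$ acting by the ray permutation, and combining with the preceding proposition yields $\aut(\calH_n)\cong\calH_n\rtimes\Sigma_n$ for $n\ge2$. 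I expect the ray-preservation step — extracting $\sigma$ and verifying that the corrected map genuinely translates each ray — to be the main obstacle, since it is exactly the point at which the end geometry of $X_n$, and not merely abstract group theory, enters.
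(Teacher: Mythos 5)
Your proposal follows the paper's route exactly: it invokes the preceding proposition identifying $\aut(\calH_n)$ with $N_{\Sym_n}(\calH_n)$ and then computes that normalizer as the internal semidirect product $\calH_n\rtimes\Sigma_n$ using the end (ray) structure of $X_n$, which is precisely the argument sketched in the paragraph before the theorem. The only difference is one of rigor: where the paper simply asserts that a normalizing $\phi$ satisfies $(R_i^*)\phi=R_j^*$ and then concludes $\phi\sigma^{-1}\in\calH_n$ from $(R_i^*)\phi\sigma^{-1}=R_i^*$ alone, you correctly note that preserving each ray up to finite sets does not by itself make a bijection an eventual translation, and you close that gap by conjugating the ray-shifts through $\psi=\phi\sigma^{-1}$ to force $\psi$ to be eventually a translation on each ray; your almost-invariant-set argument for the first assertion is likewise a sound expansion of what the paper leaves to the reader.
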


\section{The $R_\infty$ property for $\calH_n$, $n\ge2$}\label{sec:R number}

We consider the Houghton's groups $\calH_n$ with $n\ge2$.
Let $\phi$ be an automorphism on $\calH_n$. Remark that, when $n\ge3$,
$\phi$ induces an automorphism $\phi'$ on the commutator subgroup $\calH_n'=\FSym_n$
and an automorphism $\bar\phi$ on $\bbz^{n-1}$ so that
the following diagram is commutative:
$$
\CD
1@>>>\FSym_n@>{i}>>\calH_n@>{\pi}>>\bbz^{n-1}@>>>1\\
@.@VV{\phi'}V@VV{\phi}V@VV{\bar\phi}V\\
1@>>>\FSym_n@>{i}>>\calH_n@>{\pi}>>\bbz^{n-1}@>>>1
\endCD
$$
But when $n=2$, $\calH_2'=\FAlt_2$ and $\calH_2/\calH_2'=\bbz\oplus\bbz_2$.
Since $\FSym_2$ is a normal subgroup of $\calH_2$, we have the following commutative diagram
$$
\CD
@.@.1@.1\\
@.@.@AAA@AAA\\
@.@.\bbz@>=>>\bbz\\
@.@.@AAA@AAA\\
1@>>>\FAlt_2@>>>\calH_2@>>>\bbz\oplus\bbz_2@>>>1\\
@.@AA{=}A@AAA@AAA\\
1@>>>\FAlt_2@>>>\FSym_2@>>>\bbz_2@>>>1\\
@.@.@AAA@AAA\\
@.@.1@.1
\endCD
$$
Let $\phi \in\aut(\calH_2)$.
Then $\phi$ restricts to an element $\phi'$ of $\aut(\calH_2')=\aut(\FAlt_2)=\aut(\FSym_2)$,
and hence induces an automorphism $\bar\phi$ on $\bbz$ so that the following diagram is commutative
$$
\CD
1@>>>\FSym_2@>>>\calH_2@>>>\bbz@>>>1\\
@.@VV{\phi'}V@VV{\phi}V@VV{\bar\phi}V\\
1@>>>\FSym_2@>>>\calH_2@>>>\bbz@>>>1
\endCD
$$
These diagrams induce an exact sequence of Reidemeister sets
$$
\mathcal{R}[\phi']\buildrel{\hat{i}}\over\lra
\mathcal{R}[\phi]\buildrel{\hat{\pi}}\over\lra
\mathcal{R}[\bar{\phi}]\lra1.
$$
Because $\hat{\pi}$ is surjective, we have that if $R(\bar\phi)=\infty$, then $R(\phi)=\infty$.
Consequently, we have

\begin{Lemma}\label{lemma:infty}
Let $\phi$ be an automorphism on $\calH_n$, $(n\ge2)$.
If $R(\bar\phi)=\infty$, then $R(\phi)=\infty$.
\end{Lemma}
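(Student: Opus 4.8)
The plan is to read off the statement directly from the exact sequence of Reidemeister sets
$$
\mathcal{R}[\phi']\buildrel{\hat{i}}\over\lra
\mathcal{R}[\phi]\buildrel{\hat{\pi}}\over\lra
\mathcal{R}[\bar{\phi}]\lra1
$$
established just above, the only ingredient I actually need being the surjectivity of $\hat\pi$. Since a surjection of sets forces $|\mathcal{R}[\phi]|\ge|\mathcal{R}[\bar\phi]|$, that is $R(\phi)\ge R(\bar\phi)$, the hypothesis $R(\bar\phi)=\infty$ will immediately give $R(\phi)=\infty$. So the entire argument reduces to the surjectivity of the map on Reidemeister classes induced by $\pi$.

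To keep things self-contained I would verify the two properties of $\hat\pi$ that underlie this. First, $\hat\pi$ is well defined: it should send the $\phi$-twisted class of $a\in\calH_n$ to the $\bar\phi$-twisted class of $\pi(a)$, and if $b=ha\phi(h)^{-1}$ for some $h\in\calH_n$, then applying $\pi$ and using the commutativity $\pi\circ\phi=\bar\phi\circ\pi$ read off from the relevant diagram yields
$$
\pi(b)=\pi(h)\,\pi(a)\,\bar\phi(\pi(h))^{-1},
$$
so $\pi(a)$ and $\pi(b)$ determine the same $\bar\phi$-twisted class. Second, $\hat\pi$ is surjective: given a class represented by $\bar a\in\bbz^{n-1}$ (which is $\bbz$ when $n=2$), the surjectivity of $\pi$ lets me pick $a\in\calH_n$ with $\pi(a)=\bar a$, and then $\hat\pi$ carries the class of $a$ onto the prescribed class. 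This works uniformly for $n\ge3$ and $n=2$, since in both cases the final short exact sequence used has kernel $\FSym_n$ and free abelian quotient.

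I do not expect a genuine obstacle within this lemma: the content is purely the functoriality and exactness of the Reidemeister sequence attached to the two commutative diagrams preceding the statement, and surjectivity of $\hat\pi$ is forced by surjectivity of $\pi$. The real difficulty is deferred to the companion analysis of $\bar\phi$ as an automorphism of the free abelian quotient, where one must show that $R(\bar\phi)=\infty$ for every automorphism $\phi$ that arises; that is precisely where the structure $\aut(\calH_n)\cong\calH_n\rtimes\Sigma_n$ of Theorem~\ref{auto} is exploited to control the induced action on $\bbz^{n-1}$.
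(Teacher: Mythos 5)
Your proposal is correct and follows essentially the same route as the paper: the authors likewise derive the lemma from the exact sequence of Reidemeister sets induced by the commutative diagrams, citing only the surjectivity of $\hat\pi$ (your explicit verification of well-definedness via $\pi\circ\phi=\bar\phi\circ\pi$ is exactly the detail they leave implicit). Nothing is missing.
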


By Theorem~\ref{auto}, $\phi=\mu(\gamma\sigma)$ for some $\gamma\in\calH_n$ and $\sigma\in\Sigma_n$.
First, we will show that when $\phi=\mu(\sigma)$ for $\sigma \in \Sigma_n$ the Reidemeister number of $\phi$ is infinity.
When $\sigma=\id$, $\phi$ and hence $\bar\phi$ are identities. It is easy to see from definition that $R(\bar\phi)=R(\id)=\infty$, and so $R(\phi)=\infty$.

One useful observation in calculating $R(\mu(\sigma))$ is that a product
$\sigma=\sigma_1\sigma_2$ induces a bijection
\begin{equation}\label{eq:bijection}
R[\mu(\sigma_1)]\longleftrightarrow R[\mu(\sigma)],
\end{equation} which follows from
$$
b = h a h^{\sigma_1} \Leftrightarrow b \sigma_2 = h (a\sigma_2) h^{\sigma_1 \sigma_2}
$$ for all $a, b, h\in \calH_n$. Note that any product for $\sigma$ induces a bijection between the twist conjugacy classes of $\sigma$ and of the first term in the product. Recall that a cycle decomposition of a permutation $\sigma$ allows one to write $\sigma$ as a product of disjoint cycles. Since disjoint cycles commute there exists a bijection between $R[\mu(\sigma)]$ and $R[\mu(\sigma_1)]$ for any cycle $\sigma_1$ in a cycle decomposition of $\sigma$. The following observation plays a crucial role in the sequel.

\begin{Rmk}\label{rmk:bijection}
For a cycle $\sigma_1$ in a cycle decomposition of $\sigma\in \Sigma_n$, $R(\mu(\sigma_1))=\infty$ if and only if $R(\mu(\sigma))=\infty$.
\end{Rmk}

%In the following theorem, we show directly that even when $\sigma\ne\id$, the Reidemeister number $R(\phi)$ is infinite.

Recall that the \emph{cycle type} of a permutation $\tau\in \FSym_n$ encodes the data of how many cycles of each length are present in a cycle decomposition of $\tau$. Note that two permutations $\tau$ and $\tau'$ have the same cycle type if and only if they are conjugate in $\FSym_n$. In particular two cycles determine the same conjugacy class if and only if they have the same length. We extend this to establish a criterion for twisted conjugacy classes of cycles with respect to an automorphism $\phi= \mu(\sigma)$ when $\sigma \in \Sigma_n$ is a cycle.

\begin{Lemma}\label{lemma:sigma}
Suppose $\sigma\ne\id \in \Sigma_n$ is a cycle and $n\ge2$. A pair of cycles $\tau$ and $\tau'$ on the same ray determine the same twisted conjugacy class of $\phi=\mu(\sigma)$ if and only if they have the  equal length. In particular $R(\phi)= \infty$.
\end{Lemma}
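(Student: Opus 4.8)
The plan is to convert the twisted conjugacy of $\phi=\mu(\sigma)$ into ordinary conjugacy inside the overgroup $N_{\Sym_n}(\calH_n)=\calH_n\rtimes\Sigma_n$ of Theorem~\ref{auto}, and then to extract an invariant from the cycle structure of a permutation of $X_n$. Since $\mu(\sigma)(h)=h^{\sigma}=\sigma^{-1}h\sigma$, the defining relation $\tau'=h\,\tau\,\mu(\sigma)(h)^{-1}$ rearranges (multiplying on the right by $\sigma^{-1}$) to
$$
\tau'\sigma^{-1}=h\,(\tau\sigma^{-1})\,h^{-1},
$$
so $\tau$ and $\tau'$ determine the same twisted conjugacy class of $\phi$ if and only if $\tau\sigma^{-1}$ and $\tau'\sigma^{-1}$ are conjugate in $\Sym_n$ by an element of $\calH_n$. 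This is the $\sigma_2=\sigma^{-1}$ instance of the philosophy behind \eqref{eq:bijection}. Because conjugation preserves the cycle type of a permutation of $X_n$, the cycle type of $\tau\sigma^{-1}$ is a twisted-conjugacy invariant, and the core of the proof is to compute it and show that it records precisely the length of $\tau$.

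Next I would carry out this computation. Let $S\subseteq\{1,\dots,n\}$ be the support of the ray-cycle $\sigma$, with $m=|S|\ge2$, and say $\tau$ is an $\ell$-cycle ($\ell\ge2$) on the ray $R_k$. The permutation $\sigma^{-1}$ preserves the level $p$ of each point $(\cdot,p)$ and cyclically permutes the $m$ rays of $S$, while $\tau$ disturbs only finitely many levels; hence outside a finite set $\tau\sigma^{-1}$ decomposes into $m$-cycles (one through the $m$ rays of $S$ at each high level) together with fixed points on the rays outside $S$. The length $\ell$ enters only through the finite part: tracing the orbit through the support levels of $\tau$ shows that when $k\in S$ the $\ell$ base-level $m$-cycles meeting the support of $\tau$ are merged into a \emph{single} cycle of length $m\ell$, whereas when $k\notin S$ the cycle $\tau$ survives as a single $\ell$-cycle disjoint from the $m$-cycles of $\sigma^{-1}$. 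In either case $\tau\sigma^{-1}$ possesses a unique ``long'' cycle, whose length ($m\ell$, respectively $\ell$) exceeds the generic cycle length $m$ and therefore recovers $\ell$. Since twisted conjugacy forces equal cycle type, this yields the ``only if'' direction: equal class implies equal length.

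For the converse I would manufacture a conjugator lying in $\calH_n$. When $k\notin S$, any element of $\FSym_n$ supported on $R_k$ that conjugates $\tau$ to $\tau'$ commutes with $\sigma^{-1}$ and does the job directly. When $k\in S$, I would pick a level bound $P$ so large that the supports of both $\tau$ and $\tau'$ lie below it; the finite set $K$ of points of level $\le P$ on the rays of $S$ is then invariant under both $\tau\sigma^{-1}$ and $\tau'\sigma^{-1}$, on which they restrict to permutations of the same cycle type (one $m\ell$-cycle, the rest $m$-cycles), while off $K$ both agree with $\sigma^{-1}$. A permutation $h_0$ of $K$ realizing the cycle-type matching, extended by the identity off $K$, lies in $\FSym_n\subseteq\calH_n$ and satisfies $\tau'\sigma^{-1}=h_0(\tau\sigma^{-1})h_0^{-1}$, establishing the twisted conjugacy. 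Taking, for each $\ell\ge2$, an $\ell$-cycle on one fixed ray then gives infinitely many pairwise inequivalent classes, so $R(\phi)=\infty$. I expect the delicate points to be exactly the cycle-structure count—verifying that the single $\ell$-cycle merges precisely the right $m$-cycles into one cycle of length $m\ell$ with no accidental splitting—and, in the converse, ensuring the matching permutation can be chosen with finite support so that the conjugator genuinely belongs to $\calH_n$ and not merely to $\Sym_n$.
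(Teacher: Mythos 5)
Your proposal is correct, and it takes a genuinely different route from the paper. The paper proves the ``if'' direction by exhibiting an explicit conjugator: for disjoint $m$-cycles $\tau=(p_1\cdots p_m)$, $\tau'=(q_1\cdots q_m)$ it builds the interleaved $2m$-cycle $h_1=(p_1q_1\cdots p_mq_m)$, checks $\tau'^{-1}h_1\tau=h_1$, and then takes $h=h_1^{\sigma^{\ell-1}}\cdots h_1^{\sigma}h_1$, which is $\sigma$-invariant; non-disjoint cycles are handled by transitivity through a third cycle. For the ``only if'' direction the paper iterates the relation $h^{\sigma}=\tau'^{-1}h\tau$ through the order of $\sigma$ to obtain $h=c'hc$ and derives a contradiction by showing $h$ must carry $\supp(c')$ bijectively onto the strictly smaller set $\supp(c)$. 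You instead push the bijection trick of \eqref{eq:bijection} one step further: rewriting $\tau'=h\tau\,\mu(\sigma)(h)^{-1}$ as $\tau'\sigma^{-1}=h(\tau\sigma^{-1})h^{-1}$ converts twisted conjugacy into ordinary conjugacy of the permutations $\tau\sigma^{-1}$ of $X_n$, whose cycle type (one cycle of length $m\ell$, resp.\ $\ell$, sitting inside infinitely many $m$-cycles and fixed points) is then a complete invariant recording $\ell$ --- your orbit computation showing the $\ell$ level-$m$-cycles merge into a single $m\ell$-cycle is correct, and since the exceptional cycle count ($1$ versus $0$ cycles of length $m\ell$) is finite, the cardinality subtleties of infinite cycle types cause no trouble. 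Your converse construction is also sound: the block $K=S\times\{1,\dots,P\}$ is genuinely invariant under both $\tau\sigma^{-1}$ and $\tau'\sigma^{-1}$, off $K$ they coincide with $\sigma^{-1}$, so a cycle-type-matching permutation of $K$ extended by the identity lies in $\FSym_n\subseteq\calH_n$ and conjugates one to the other. What your approach buys is a single invariant governing both directions, no need for the $2m$-cycle device or the support-counting contradiction, and a transparent treatment of non-disjoint $\tau,\tau'$; what the paper's approach buys is that it works entirely with explicit elements of $\calH_n$ and a template (the $\sigma$-symmetrized conjugator) that it reuses elsewhere.
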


\begin{proof}
Suppose that $\tau$ and $\tau'$ are cycles on the same ray of the equal length.
We first consider the case when $\sigma$ permutes rays
as an $\ell$-cycle $(1 \,2 \cdots \, \ell)$ for some $2\leq \ell\leq n$,
and $\tau$ and $\tau'$ are disjoint cycles on $R_1$.
Two cycles $\tau $ and $\tau'$ can be written as
$$
\tau=(p_1\cdots p_m) \text{ and }\tau'=(q_1 \cdots q_m)
$$
(by suppressing the ray notation) where $m\geq 2$.
We need to find an element $h\in\calH_n$ such that $\tau' = h\tau \mu(\sigma) (h)^{-1}$,
or equivalently
\begin{equation}\label{eq:class}
 h ^\sigma = \tau'^{-1} h \tau.
\end{equation} Let $h_1$ be the $2m$-cycle on $R_1$ given by
$$
h_1 = (p_1 q_1 \,p_2\, q_2 \cdots p_m\, q_m).
$$ It is direct to check that
\begin{equation}\label{eq:reduction}
\tau'^{-1} h_1 \tau = (q_m\cdots q_1)(p_1 q_1 \,p_2\, q_2 \cdots p_m\, q_m)(p_1 \cdots p_m)= h_1.
\end{equation}
Consider $h\in \calH_n'$ defined by
$$
h= h_1^{\sigma^{\ell-1}} \cdots h_1^\sigma h_1.
$$
Note that $h$ is a product of $\ell$ disjoint $2m$-cycles
each of which is an `isometric translation' of $h_1$ to the ray $R_{\ell}, \cdots, R_{2}, R_1$.
More precisely $(k+1,p) h_1^{\sigma^k} = (1,p)h_1 {\sigma^k}$ for all $(1,p) \in \supp(h_1)$
and $k=1, \cdots,\ell-1$. One crucial observation is that
\begin{equation*}
h^\sigma = h.
\end{equation*}
The above follows from that $\sigma$ is a $\ell$-cycle
and that components of $h$ have pairwise disjoint supports.
Moreover, $\tau'$ commutes with $h_1^{\sigma^{\ell-1}} \cdots h_1^\sigma$, so we have
\begin{align*}
h^\sigma &= h =
h_1^{\sigma^{\ell-1}} \cdots h_1^\sigma h_1=h_1^{\sigma^{\ell-1}} \cdots h_1^\sigma (\tau'^{-1} h_1 \tau )\\
&= \tau'^{-1}( h_1^{\sigma^{\ell-1}} \cdots h_1^\sigma h_1)\tau =\tau'^{-1}h \tau.
\end{align*}
Therefore $h$ satisfies the condition (\ref{eq:class}),
and hence $[\tau]=[\tau']$ in $R[\mu(\sigma)]$.

\begin{comment}
For the case when $\sigma$ is not a cycle, we use a cycle decomposition of $\sigma$. Suppose that $\tau$ and $\tau'$ are {\bf disjoint} $m$-cycles on $R_1$ and that $\sigma$ does not fix $R_1$. Then $\sigma$ can be written as $\sigma=\sigma_1\sigma_2\in\Sigma_n$
such that $\sigma_1$ is of the form $(12\cdots\ell)$. With $h_1$ defined as above, let
$$
h= h_1^{\sigma_1^{\ell-1}} \cdots h_1^{\sigma_1} h_1.
$$
Then we have seen that $h^{\sigma_1}=h$ and $h^{\sigma_1}=\tau'^{-1}h\tau$.
Now since $h^{\sigma_2}=h$, we have
$$
h^\sigma=(h^{\sigma_2})^{\sigma_1}=h^{\sigma_1}=\tau'^{-1}h\tau,
$$
showing that $[\tau]=[\tau']$ in $R[\mu(\sigma)]$.
Finally we consider $\sigma$ which fixes the ray $R_1$.
Let $\tau,\tau'$ and $h_1$ be as before.
Then $h_1^\sigma=h_1=\tau'^{-1}h_1\tau$,
showing again that $[\tau]=[\tau']$ in $R[\mu(\sigma)]$.
\end{comment}

Applying appropriate conjugations one can extend the above observations to show that $[\tau]=[\tau']$ in $R[\mu(\sigma)]$ for any cycle $\sigma\in \Sigma_n$ and for any two disjoint cycles $\tau$ and $\tau'$ on the same ray with the equal length. Therefore, by the transitivity of the class, we can see that two cycles (not necessarily disjoint) on a ray belong to the same class for $\phi=\mu(\sigma)$ as long as they have the same length.
Indeed, if two $m$-cycles $\tau$ and $\tau'$ are not disjoint, one takes another $m$-cycle $\tau_0$ which is disjoint with $\tau $ and $\tau'$ to have $ [\tau]=[\tau_0]=[\tau']$. Thus we are done with one direction.

For the converse, suppose there exists $h\in \calH_n$ satisfying the condition (\ref{eq:class}) for a cycle $\sigma\in \Sigma_n$ even when cycles $\tau $ and $\tau'$ on the same ray have different lengths $m$ and $m'$ respectively. Assume $m' >m$. Let $\ell$ be the order of $\sigma$.
Applying the identity (\ref{eq:class}) $\ell$ times, we have
\begin{equation}\label{eq:iterated_conjugation}
h= h^{\sigma^\ell}= (\tau'^{-1})^{\sigma^{\ell-1}} \cdots ({\tau'^{-1}){^\sigma}} \tau'^{-1} h \tau {\tau^\sigma} \cdots \tau^{\sigma^{\ell-1}}.
\end{equation}
Let $c'=(\tau'^{-1})^{\sigma^{\ell-1}} \cdots (\tau'^{-1})^\sigma \tau'^{-1}$ and $c=\tau {\tau^\sigma} \cdots \tau^{\sigma^{\ell-1}} $ be the products of first and last $\ell$ terms on the RHS of (\ref{eq:iterated_conjugation}). Note that each component of $c'$ is an `isometric translation' of $\tau'^{-1}$ to different $\ell$ rays (and similarly for each component of $c$). To draw a contradiction, we use the fact that the size of $\supp(c')$ is strictly greater than that of $\supp(c)$. For details we need to examine how $h=c'hc$ acts on $\supp(c')$. Being a disjoint union, $\supp(c')=\bigcup_{0\leq k\leq \ell-1} (\supp(\tau'))\sigma^k$, $\supp(c')$ has size $\ell\times m'$, while $\supp(c)$ has size $\ell\times m$. For each $P \in \supp(c')$, we have
$$
(P)h=  (P)c' h c = (P')hc\;\; \text{  or  }\;\; (P)hc^{-1} = (P')h
$$
where $P'$ is a point in the same ray of $P$ but distinct from $P$. We claim that $(P)h$ belongs to $\supp(c)$. Otherwise $c^{-1}$ fixes $(P)h$, forcing $(P)h = (P')h$. Since $P\in \supp(c')$ was arbitrary, a bijection $h$ maps $\supp(c')$ to $\supp(c)$. We conclude that there does not exists $h\in \calH_n$ satisfying the condition (\ref{eq:class}) for cycles $\tau$ and $\tau'$ on the same ray with different lengths.
\end{proof}

\begin{Thm}\label{thm:conclusion_n}
The Houghton's groups $\calH_n$ have the $R_\infty$ property for all $n\ge2$.
\end{Thm}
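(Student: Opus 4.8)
The plan is to obtain the theorem by assembling the structural results already established, reducing an arbitrary automorphism to the pure ray-permutation case settled in Lemma~\ref{lemma:sigma}. By Theorem~\ref{auto}, every $\phi\in\aut(\calH_n)$ is of the form $\phi=\mu(\gamma\sigma)$ with $\gamma\in\calH_n$ and $\sigma\in\Sigma_n$, so it suffices to compute $R(\mu(\gamma\sigma))$ for all such pairs. My first move is to discard the inner factor $\gamma$: writing $\gamma\sigma=\sigma\cdot\gamma^\sigma$ and observing that $\gamma^\sigma\in\calH_n$ by normality of $\calH_n$, the bijection \eqref{eq:bijection} (applied with first term $\sigma$ and second term $\gamma^\sigma\in\calH_n$) yields $R(\mu(\gamma\sigma))=R(\mu(\sigma))$. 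Hence the Reidemeister number depends only on the class of $\sigma$ modulo $\calH_n$, and the problem collapses to the two cases $\sigma=\id$ and $\sigma\ne\id$.

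In the case $\sigma=\id$ the automorphism $\phi=\mu(\gamma)$ is inner, so the induced map $\bar\phi$ on the relevant abelianization quotient ($\bbz^{n-1}$ for $n\ge3$, and $\bbz$ for $n=2$) is the identity. Since $R(\id)$ on an infinite free abelian group is infinite, Lemma~\ref{lemma:infty} gives $R(\phi)=\infty$. In the case $\sigma\ne\id$ I would pick any nontrivial cycle $\sigma_1$ occurring in the cycle decomposition of $\sigma$; such a cycle exists precisely because $\sigma$ is a non-identity permutation of the rays. Lemma~\ref{lemma:sigma} shows $R(\mu(\sigma_1))=\infty$, as the infinitely many cycles of pairwise distinct lengths supported on a single ray already determine infinitely many twisted classes, and Remark~\ref{rmk:bijection} propagates this to the full permutation, giving $R(\mu(\sigma))=\infty$. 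Combined with the reduction of the previous paragraph, $R(\phi)=R(\mu(\sigma))=\infty$ in every case, which is exactly the asserted $R_\infty$ property.

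Most of the genuine difficulty has already been absorbed into Lemma~\ref{lemma:sigma}, so for the theorem itself the only step requiring care is the reduction $R(\mu(\gamma\sigma))=R(\mu(\sigma))$, and I expect the main (though modest) obstacle to lie exactly there. The hinge is that the second factor $\gamma^\sigma$ lands inside $\calH_n$, which is what makes the substitution underlying \eqref{eq:bijection} keep one inside $\calH_n$ and hence a genuine bijection of Reidemeister sets; equivalently, $R(\mu(\tau))$ is seen to depend only on the coset $\calH_n\tau^{-1}$, because $\mu(\tau)$-twisted conjugacy of elements of $\calH_n$ coincides with ordinary $\calH_n$-conjugacy inside that coset, and $\calH_n(\gamma\sigma)^{-1}=\calH_n\sigma^{-1}$. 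Beyond this bookkeeping I anticipate no serious difficulty. I would also flag that $n=1$ is deliberately excluded here and treated separately, consistent with $\calH_1=\FSym_1$ failing to be finitely generated, so that the abelianization argument above is unavailable in that case.
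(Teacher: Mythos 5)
Your proposal is correct and follows essentially the same route as the paper: reduce $\mu(\gamma\sigma)$ to $\mu(\sigma)$ via the factorization $\gamma\sigma=\sigma\gamma^\sigma$ and the bijection \eqref{eq:bijection}, dispose of $\sigma=\id$ via Lemma~\ref{lemma:infty}, and for $\sigma\ne\id$ combine Remark~\ref{rmk:bijection} with Lemma~\ref{lemma:sigma}. Your explicit remark that the second factor $\gamma^\sigma$ must lie in $\calH_n$ for \eqref{eq:bijection} to give a genuine bijection of Reidemeister sets is a point the paper glosses over, and is worth keeping.
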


\begin{proof}Theorem \ref{auto} says that an automorphism $\phi$ of $\calH_n$ is determined by $\phi=\mu(g\sigma)$ for some $g\in\calH_n$ and $\sigma \in \Sigma_n$. As we noted earlier, we may assume that $\sigma \neq 1$. Note that
$$
g\sigma=\sigma(\sigma^{-1}g\sigma)=\sigma g'
$$ with $g'\in\calH_n$. The product in RHS yields a bijection between $R[\mu(g\sigma)]$ and $R[\mu(\sigma)]$ as in (\ref{eq:bijection}). Consider a cycle $\sigma_1$ in a cycle decomposition of $\sigma$. Remark \ref{rmk:bijection} together with Lemma \ref{lemma:sigma} implies $R[\mu(\sigma)] = R[\mu(\sigma_1)]=\infty$. Therefore we have $R[\phi]=R[\mu(g\sigma)] = R[\mu(\sigma)]=\infty$ for all $\phi \in \aut(\calH_n)$ when $n\geq2$.
\begin{comment}In general, we consider the case where $\phi=\mu(\gamma\sigma)$ for some $\gamma\in\calH_n$.
Noting that $\gamma\sigma=\sigma(\sigma^{-1}\gamma\sigma)=\sigma\gamma'$ with $\gamma'\in\calH_n$,
we want to show that $R(\phi)=R(\mu(\sigma\gamma'))=\infty$.
This follows from the fact that
$[x]\inR[\mu(\sigma)]\mapsto [x\gamma']\inR[\mu(\sigma\gamma')]$ is a bijection,
because $y=gx\mu(\sigma)(g)^{-1}$ if and only if
$y\gamma'=g(x\gamma')\mu(\sigma\gamma')(g)^{-1}$, i.e., $[x]=[y]$ in $R[\mu(\sigma)]$ if and only if $[x\gamma']=[y\gamma']$ in $R[\mu(\sigma\gamma')]$.
Therefore, we have $R(\mu(\gamma\sigma))=R(\mu(\sigma))=\infty$.
\end{comment}
\end{proof}

We remark that Lemma~\ref{lemma:infty} can be used extensively to establish Theorem~\ref{thm:conclusion_n}. As observed in commuting diagrams above an automorphism $\phi=\mu(g\sigma)$ of $\calH_n$ induces an automorphism $\overline{\phi}$ on the abelianization $\bbz^{n-1}$, which is freely generated by $\pi(g_1),\cdots, \pi(g_{n-1})$. Since $\mu(g)$ fixes the generates $g_1, \cdots, g_{n-1}$, se wee that $\overline{\phi}= \mu(\sigma)$. The Reidemeister number of an automorphism on $\bbz^{n-1}$ ($n\geq 2$) is well understood. By  \cite[Theorem~6.11]{HLP}, $R(\overline{\phi})=\infty$ if and only if $\overline{\phi}$ has eigenvalue $1$. By using induction on $n$ one can show that $\overline{\phi}= \mu(\sigma)$ has eigenvalue $1$ unless $\sigma$ is an $n$-cycle on the rays $R_1, \cdots, R_n$. Now Lemma~\ref{lemma:sigma} implies that $R(\mu(\sigma))=\infty $ if $\sigma$ is a cycle, and so $R(\phi)=R(\overline{\phi})=\infty$.

\section{The group $\calH_1$ and its $R_\infty$ property}\label{sec:H_1}
In this section, we will study the $R_\infty$ property for the group $\calH_1$.
We remark that $\calH_1=\FSym_1$ is generated by the transpositions exchanging two consecutive points of $R_1$.
Let $\phi$ be an automorphism of $\calH_1$.
Since $\aut(\calH_1)=\aut(\FSym_1)\cong\Sym_1$, we have that $\phi=\mu(\gamma)$ for some $\gamma\in\Sym_1$.

\begin{Lemma}\label{lemma:image}
Let $\varphi:G\to G$ be an endomorphism. Then for any $g\in G$
we have $[g]=[\varphi(g)]$ in $R[\varphi]$.
\end{Lemma}

\begin{proof}
The lemma follows from
\begin{align*}
&\varphi(g)=(g^{-1})g\varphi(g^{-1})^{-1}.\qedhere
\end{align*}
\end{proof}

An infinite cycle $\gamma \in \Sym_1$ is given by a bijection $\gamma: \bbz \to R_1$. For convenience we use the $1$-to-$1$ correspondence to denote points of $\supp(\gamma)\subset R_1$ by integers, that is, each point of $\supp(\gamma)$ is denoted by its \emph{preimage}. With this notation, each infinite cycle can be realized as the translation on $\bbz$ by $+1$. Remark that if $h\in \FSym_1$ with $\supp(h) \subset \supp(\gamma)$ then the conjugation $\mu(\gamma)$ \emph{shifts} $\supp(h)$ to $\supp(h^\gamma)$ by $+1$;
\begin{equation}\label{eq:shift}
(k)h = k' \; \Leftrightarrow (k+1)h^\gamma = k'+1
\end{equation}
for all $k\in \supp(h)$. We say that an infinite cycle $\gamma$ \emph{conjugates} a permutation $\tau\in\FSym_1$ to $\tau'$ if $\tau'$ can be written as a conjugation of $\tau$ by a power of $\gamma$.

\begin{Lemma}\label{lemma:infinite cycle}
For an infinite cycle $\gamma\in \Sym_1$, two transpositions $\tau$ and $\tau'$ with $\supp(\tau) \subset \supp(\gamma)$ and $\supp(\tau')\subset \supp(\gamma)$ determine the same conjugacy class for $\phi=\mu(\gamma)$ if and only if $\gamma$ conjugates $\tau$ to $\tau'$. In particular $R(\mu(\gamma))= \infty$.
\end{Lemma}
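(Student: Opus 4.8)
The plan is to reduce the \emph{twisted} conjugacy relation to an \emph{ordinary} conjugacy relation and then read off a cycle-type invariant. Throughout I identify $\supp(\gamma)$ with $\bbz$ as in the paragraph preceding the statement, so that $\gamma$ is the shift $k\mapsto k+1$ and a transposition with support in $\supp(\gamma)$ has the form $(a\,b)$ with $a,b\in\bbz$. Under this identification, ``$\gamma$ conjugates $\tau$ to $\tau'$'' means $\tau'=\tau^{\gamma^{j}}$ for some $j\in\bbz$; equivalently, $\{a',b'\}$ is a translate of $\{a,b\}$, i.e.\ $\tau$ and $\tau'$ have the same gap $d:=|a-b|$. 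The lemma thus asserts that $[\tau]=[\tau']$ in $R[\mu(\gamma)]$ precisely when $\tau$ and $\tau'$ have equal gap.

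The ``if'' direction I would handle by iterating Lemma~\ref{lemma:image} applied to $\phi=\mu(\gamma)$: taking $g=\tau^{\gamma^{i}}$ gives $[\tau^{\gamma^{i}}]=[\phi(\tau^{\gamma^{i}})]=[\tau^{\gamma^{i+1}}]$ for every $i\in\bbz$, so $[\tau]=[\tau^{\gamma^{j}}]=[\tau']$ whenever $\gamma$ conjugates $\tau$ to $\tau'$.

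For the converse I would untwist. Suppose $[\tau]=[\tau']$, so there is $h\in\FSym_1$ with $\tau'=h\,\tau\,(h^{\gamma})^{-1}$. Writing $h^{\gamma}=\gamma^{-1}h\gamma$ and multiplying on the right by $\gamma^{-1}$ gives
\[
\tau'\gamma^{-1}=h\,(\tau\gamma^{-1})\,h^{-1},
\]
so $\tau\gamma^{-1}$ and $\tau'\gamma^{-1}$ are conjugate in $\Sym_1$. The decisive point is that the gap $d$ is visible in the cycle structure of $\tau\gamma^{-1}$: a direct computation of its action on $\bbz$ (namely $a\mapsto b-1$, $b\mapsto a-1$, and $k\mapsto k-1$ otherwise) shows that its cycle decomposition consists of a single bi-infinite cycle together with exactly one finite cycle, supported on $\{a,a+1,\dots,b-1\}$ and hence of length $d$ (a single fixed point when $d=1$). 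Since conjugation preserves cycle type, $\tau\gamma^{-1}$ and $\tau'\gamma^{-1}$ have the same multiset of finite cycle lengths; as each has exactly one finite cycle, of lengths $d$ and $d'$, we get $d=d'$, and therefore $\gamma$ conjugates $\tau$ to $\tau'$.

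I expect the finite-cycle bookkeeping to be the main obstacle — not conceptually, but because one must verify carefully that $\tau\gamma^{-1}$ has \emph{no} finite cycle other than the length-$d$ one and that all remaining points lie on one bi-infinite cycle, so that ``same finite cycle type'' genuinely forces $d=d'$. Granting this, the final assertion is immediate: transpositions of gap $d=1,2,3,\dots$ all have support in $\supp(\gamma)=R_1$ and, by the two directions above, lie in pairwise distinct classes of $R[\mu(\gamma)]$, whence $R(\mu(\gamma))=\infty$.
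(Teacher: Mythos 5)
Your proof is correct, but it takes a genuinely different route from the paper's. For the converse, the paper works directly with the twisted-conjugacy equation $h^\gamma=\tau'h\tau$ after normalizing $\tau=(0\,\ell)$, $\tau'=(0\,\ell')$: it runs a simultaneous induction on the supports of $h$ and $h^\gamma$ to show that $h$ must fix $-1$ and $\ell'+1$, and then derives a contradiction from $(\ell')h=\ell$, which via the shift relation forces $\ell'+1\in\supp(h^\gamma)$. You instead untwist: from $\tau'=h\tau(h^\gamma)^{-1}$ you get $\tau'\gamma^{-1}=h(\tau\gamma^{-1})h^{-1}$, reducing the problem to ordinary conjugacy of $\tau\gamma^{-1}$ and $\tau'\gamma^{-1}$ in $\Sym_1$, and you read off the gap $d$ from the cycle type. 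Your cycle computation is right: for $\tau=(a\,b)$ with $a<b$ and $d=b-a$, the permutation $\tau\gamma^{-1}$ sends $a\mapsto b-1$, $b\mapsto a-1$ and $k\mapsto k-1$ otherwise, so $\{a,a+1,\dots,b-1\}$ is a single $d$-cycle (a fixed point when $d=1$) and the complement $\{\dots,a-2,a-1\}\cup\{b,b+1,\dots\}$ forms one bi-infinite cycle via $b\mapsto a-1$; hence the multiset of finite cycle lengths is exactly $\{d\}$ and conjugacy forces $d=d'$. Your argument is cleaner and slightly stronger than the paper's (it rules out a conjugator even in all of $\Sym_1$, not just $\FSym_1$), and it exhibits the gap as an explicit complete invariant via the standard correspondence between $R[\mu(\gamma)]$ and conjugacy classes in the coset $\FSym_1\gamma^{-1}$; the paper's support-chasing argument, by contrast, stays entirely inside $\FSym_1$ and uses only the shift property (\ref{eq:shift}), at the cost of more delicate bookkeeping. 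You are also right to prove the ``if'' direction separately via Lemma~\ref{lemma:image} rather than through cycle types, since equal cycle type only yields a conjugator in $\Sym_1$, not necessarily one of finite support.
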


\begin{proof}
Assume that $\tau'=\tau^{\gamma^m}$ or $\tau'=\phi^m(\tau)$, for some $m$.
By Lemma~\ref{lemma:image}, we have $[\tau]=[\phi(\tau)]=\cdots=[\phi^m(\tau)]=[\tau']$.

For the converse, suppose that there exists $h\in \FSym_1$ satisfying
\begin{equation}\label{eq:class_c}
h^\gamma =\tau'^{-1}h \tau=\tau'h\tau
\end{equation}
for two transpositions $\tau$ and $\tau'$ with the condition on their supports, one of which $\gamma$ does not conjugate to the other. By the shift (\ref{eq:shift}), they can be written as $\tau=(0\,\ell)$ and $\tau'=(m\,m\!+\!\ell')$ for some $m\ge0$ and $\ell\neq\ell'>0$.
By Lemma~\ref{lemma:image}, which implies $[(0\, \ell')]=[(m\,\, m\!+\!\ell')]$ for all $m\in \bbz$, we may further assume that $\tau'=(0\,\ell')$ and $\ell<\ell'$.

We first claim that $(-1)h=-1$. If $-1\in \supp(h)$, the identity (\ref{eq:class_c}) says
$$
(-1)h^\gamma = (-1)\tau' h\tau = (-1)h\tau \neq -1
$$since $\tau$ and $\tau'$ fix all negative integers. So $-1 \in \supp(h^\gamma)$. Now the shift
\begin{equation*}
k\in \supp(h) \;\Leftrightarrow\; k+1 \in \supp(h^\gamma)
\end{equation*}
implies $-2 \in \supp(h)$. Observe that the same argument establishes simultaneous induction on $k$ for
$$
-k \in \supp(h) \text{  and  } -k \in \supp(h^\gamma)
$$for all positive $k$ with the above base cases when $k=1$. This means that $\supp(h)$ must contain all negative integers. It contradicts that $h\in \FSym_1$. Therefore $h$ fixes $-1$, or equivalently $h^\gamma$ fixes $0$. One can also show $h$ fixes $\ell'+1$ by verifying
$$
\ell'+k \in \supp(h)\text{  and  }  \ell+k \in \supp(h^\gamma)
$$for all positive $k$ if we are given the base case $\ell'+1 \in \supp(h)$ (and $\ell'+1 \in \supp(h^\gamma)$, which follows immediately by (\ref{eq:class_c})). So we also have $(\ell'+1)h=(\ell'+1)$, and hence $(\ell'+1)h^\gamma=(\ell'+1)$ by (\ref{eq:class_c}).

From the fixed point $0=(0)h^\gamma$ we have
$$
(0)\tau' h \tau =0  \;\Leftrightarrow\; (\ell')h = \ell.
$$ The shift (\ref{eq:shift}) says $\ell'+1 \in \supp(h^\gamma)$. However this contradicts that $(\ell'+1)h^\gamma=(\ell'+1)$. Therefore $\tau' =(0\, \ell')$ does not belong to the class of $\tau=(0\, \ell)$ unless $\ell=\ell'$.
\end{proof}

\begin{Lemma}\label{lemma:finite_permutation}
Suppose two permutations $\tau,\tau' \in \FSym_1$ are disjoint with a permutation $\gamma\in \FSym_1$. Then $\tau$ and $\tau'$ belong to the same class in $R[\mu(\gamma)]$ if and only if they have the same cycle type. In particular $R(\mu(\gamma))= \infty$.
\end{Lemma}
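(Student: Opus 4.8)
The plan is to reduce the twisted conjugacy relation for the \emph{inner} automorphism $\phi=\mu(\gamma)$ to ordinary conjugacy in $\FSym_1$, and then read off the cycle-type criterion. Unwinding the conventions ($h^\gamma=\gamma^{-1}h\gamma$ and $b=ha\phi(h)^{-1}$), the elements $\tau,\tau'$ lie in the same class of $R[\mu(\gamma)]$ exactly when there is $h\in\FSym_1$ with $\tau'=h\tau(h^\gamma)^{-1}=h\tau\gamma^{-1}h^{-1}\gamma$. Multiplying on the right by $\gamma^{-1}$ rewrites this as
\begin{equation*}
\tau'\gamma^{-1}=h(\tau\gamma^{-1})h^{-1}.
\end{equation*}
Hence $[\tau]=[\tau']$ in $R[\mu(\gamma)]$ if and only if $\tau\gamma^{-1}$ and $\tau'\gamma^{-1}$ are conjugate in $\FSym_1$. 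The same reversible computation shows that $[a]\mapsto[a\gamma^{-1}]$ is a bijection from $R[\mu(\gamma)]$ onto the set of ordinary conjugacy classes of $\FSym_1$; this is the whole point, and it is available precisely because $\gamma\in\FSym_1=\calH_1$ makes $\mu(\gamma)$ inner.

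Next I would translate conjugacy in $\FSym_1$ into cycle types. As recalled before Lemma~\ref{lemma:sigma}, two elements of $\FSym_1$ are conjugate if and only if they have the same cycle type, so it remains to compare the cycle types of $\tau\gamma^{-1}$ and $\tau'\gamma^{-1}$. Here the disjointness hypothesis enters: since $\supp(\tau)\cap\supp(\gamma)=\emptyset$, the permutations $\tau$ and $\gamma^{-1}$ commute and have disjoint cycle decompositions, so the cycle type of $\tau\gamma^{-1}$ is the multiset union of the cycle type of $\tau$ with that of $\gamma^{-1}$; likewise $\tau'\gamma^{-1}$ has cycle type equal to the union of those of $\tau'$ and $\gamma^{-1}$. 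Because $\gamma^{-1}$ contributes the same part to both, a multiset cancellation gives that $\tau\gamma^{-1}$ and $\tau'\gamma^{-1}$ share a cycle type if and only if $\tau$ and $\tau'$ do. Combined with the first paragraph, this yields the stated equivalence.

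Finally, for $R(\mu(\gamma))=\infty$ I would note that $X_1\setminus\supp(\gamma)$ is infinite, so for every $m\ge2$ one may choose an $m$-cycle $\tau_m$ with $\supp(\tau_m)\cap\supp(\gamma)=\emptyset$; these have pairwise distinct cycle types and hence, by the equivalence just proved, represent pairwise distinct classes in $R[\mu(\gamma)]$. Alternatively, the bijection of the first paragraph identifies $R(\mu(\gamma))$ with the number of conjugacy classes of $\FSym_1$, which is infinite.

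I do not expect a serious obstacle: the entire content is the reduction to ordinary conjugacy, in contrast with Lemma~\ref{lemma:infinite cycle}, where $\gamma\notin\calH_1$ ruled out this trick and forced the hands-on support analysis. The only points needing care are the bookkeeping of the right-action conventions in passing to $\tau'\gamma^{-1}=h(\tau\gamma^{-1})h^{-1}$, and the observation that disjointness of $\tau,\tau'$ from $\gamma$ is exactly what allows the $\gamma$-part to cancel from the two cycle types.
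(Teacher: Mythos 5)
Your proof is correct, and it takes a genuinely different route from the paper's. The paper argues directly with the twisted relation: given $\tau,\tau'$ of the same cycle type supported on $R_1'=R_1\setminus\supp(\gamma)$, it picks an ordinary conjugator $h$ also supported on $R_1'$, so that $h^\gamma=h$ and the ordinary conjugacy $\tau'=h\tau h^{-1}$ becomes the twisted one $\tau'=h\tau(h^{-1})^\gamma$; the converse is dispatched by appeal to the cycle type criterion without spelling out why an arbitrary twisted conjugator $h\in\FSym_1$ (not necessarily disjoint from $\gamma$) forces equal cycle types. You instead exploit that $\mu(\gamma)$ is inner (this is exactly the special case $\sigma_1=\id$, $\sigma_2=\gamma$ of the paper's bijection (3--1)): right-multiplication by $\gamma^{-1}$ identifies $R[\mu(\gamma)]$ with the set of ordinary conjugacy classes of $\FSym_1$, and then disjointness of $\tau,\tau'$ from $\gamma$ lets you split the cycle type of $\tau\gamma^{-1}$ as the multiset union of those of $\tau$ and $\gamma^{-1}$ and cancel the common $\gamma^{-1}$ part. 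What your approach buys is a clean, fully two-sided argument: both directions of the equivalence drop out of a single reduction to ordinary conjugacy, whereas the paper's write-up really only verifies the ``same cycle type $\Rightarrow$ same class'' direction in detail. The only hypotheses you rely on --- that $\gamma\in\FSym_1$ so that $\tau\gamma^{-1}\in\FSym_1$, that the cycle type criterion holds with conjugators in $\FSym_1$, and that $X_1\setminus\supp(\gamma)$ is infinite so all cycle lengths are realized disjointly from $\gamma$ --- are all available, so there is no gap.
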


\begin{proof}The statement follows from \emph{cycle type criterion} for usual conjugacy classes of the symmetric group on the fixed points of $\gamma\in \FSym_1$.
Any permutations on $R_1'=R_1\setminus\supp(\gamma)$ with finite supports are conjugate if and only if they have the same cycle type. For two permutations $\tau $ and $\tau'$ on $R_1'$ there exists a permutation $h \in \FSym_1$ on $R_1'$ such that
$$
\tau' = h \tau h^{-1}
$$if and only if $\tau$ and $\tau'$ have the same cycle type. Since $h^\gamma=h$ one can replace $h^{-1}$ by $(h^{-1})^\gamma$ in the identity to establish $\tau' = h \tau (h^{-1})^\gamma $.
\end{proof}

\begin{Thm}\label{thm:H1}
The group $\calH_1$ has the $R_\infty$ property.
\end{Thm}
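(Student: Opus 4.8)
The plan is to use the identification $\aut(\calH_1)=\aut(\FSym_1)\cong\Sym_1$ recalled at the start of this section, under which every automorphism is $\phi=\mu(\gamma)$ for a single $\gamma\in\Sym_1$, and then to reduce the computation of $R(\mu(\gamma))$ to the two model situations already handled in Lemma~\ref{lemma:infinite cycle} and Lemma~\ref{lemma:finite_permutation}. First I would record the reformulation that makes both lemmas transparent: since $\mu(\gamma)(h)=\gamma^{-1}h\gamma$, the relation $b=ha\,\mu(\gamma)(h)^{-1}$ is equivalent to $b\gamma^{-1}=h(a\gamma^{-1})h^{-1}$, so the map $a\mapsto a\gamma^{-1}$ carries $R[\mu(\gamma)]$ bijectively onto the set of ordinary $\FSym_1$-conjugacy classes meeting the coset $\FSym_1\gamma^{-1}\subset\Sym_1$. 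Thus it suffices to produce infinitely many $\FSym_1$-conjugacy classes inside that coset, and I would split into cases according to the cycle structure of $\gamma$.

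If $\gamma$ has only finite cycles, I would graft cycles. Choose disjoint finite cycles $c_1,c_2,\dots$ supported on points available to $\gamma^{-1}$ (its fixed points when $\supp(\gamma)$ is finite, and otherwise points taken from its infinitely many finite cycles), and set $g_t=\gamma^{-1}c_1\cdots c_t$. Each $g_t$ lies in the coset and again has only finite cycles, so for any would-be finitary conjugator $h$ with $hg_th^{-1}=g_{t'}$ one may enclose $\supp(h)$ together with the finite region where $g_t$ and $g_{t'}$ differ in a \emph{finite} $g_t$- and $g_{t'}$-invariant set $T$; then $h|_T$ conjugates $g_t|_T$ to $g_{t'}|_T$, which is impossible because these two permutations of $T$ have different cycle types (they differ precisely by the grafted cycles $c_{t+1},\dots,c_{t'}$ lying inside $T$). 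Hence the $g_t$ represent infinitely many classes. This both recovers Lemma~\ref{lemma:finite_permutation} (when $\gamma\in\FSym_1$, so $\gamma^{-1}$ has infinitely many fixed points to graft on) and extends it to all $\gamma$ with finite cycles. If instead $\gamma$ has an infinite cycle $c$ with support $S$, I would run the argument of Lemma~\ref{lemma:infinite cycle} on $S$: writing $\gamma=c\,\delta$ with $\delta$ supported off $S$, the transpositions $(0\,\ell)$ for $\ell=1,2,\dots$ (in the integer coordinates on $S$ furnished by $c$) should lie in infinitely many distinct classes of $R[\mu(\gamma)]$. The forward direction, that $c$-conjugate transpositions are twisted conjugate, is immediate from Lemma~\ref{lemma:image}.

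The hard part will be the converse in the infinite-cycle case: showing that transpositions of different length inside $S$ remain twisted-inequivalent for the full $\gamma$. In Lemma~\ref{lemma:infinite cycle} the conjugator $h\in\FSym_1$ and the shift relation \eqref{eq:shift} live entirely on the one bi-infinite cycle, but for a general $\gamma$ a conjugator may move points between $S$ and its complement, so the shift/support bookkeeping does not localize to $S$ verbatim. The crux is therefore to control such a mixing conjugator: I expect one can show that any $h$ satisfying $h^\gamma=\tau'^{-1}h\tau$ with $\tau,\tau'$ supported in $S$ must respect $S$ up to a finite set, so that its ``$S$-part'' is forced into the same finite-support contradiction as in Lemma~\ref{lemma:infinite cycle}, yielding $\ell=\ell'$. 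Establishing this localization, rather than the case division itself, is where the real work lies.
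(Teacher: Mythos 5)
Your reduction to ordinary $\FSym_1$-conjugacy classes on the coset $\FSym_1\gamma^{-1}$ is sound, and your grafting argument in the case where $\gamma$ has only finite cycles is a correct (if differently packaged) substitute for the paper's use of Lemma~\ref{lemma:finite_permutation}; the details about enclosing everything in a finite invariant set $T$ and comparing cycle types there can be made rigorous, e.g.\ by noting that each graft changes the number of cycles of the restriction to $T$. But in the infinite-cycle case you have not given a proof: you explicitly leave open the ``localization'' of a conjugator $h$ to the support $S$ of the infinite cycle, and that is a genuine gap, not a routine verification. (Note also that the statement you propose to prove --- that $h$ respects $S$ up to a finite set --- is vacuously true for any $h\in\FSym_1$ and does not by itself let you extract from $h^\gamma=\tau'^{-1}h\tau$ an equation involving $c$ alone.)

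The missing idea is one you already hold in a special case. Your coset reformulation is exactly the bijection of~(\ref{eq:bijection}) with $\sigma_1=\id$ and $\sigma_2=\gamma$: the identity $b=ha\,\mu(\sigma_1)(h)^{-1}\Leftrightarrow b\sigma_2=h(a\sigma_2)\,\mu(\sigma_1\sigma_2)(h)^{-1}$ holds for \emph{any} factorization $\gamma=\sigma_1\sigma_2$, not just the trivial one. Applying it instead with $\sigma_1=c$ (the infinite cycle) and $\sigma_2=\delta$ (the commuting remainder), right multiplication by $\delta$ gives a bijection $R[\mu(c)]\leftrightarrow R[\mu(\gamma)]$, as in Remark~\ref{rmk:bijection}. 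This reduces the whole case to computing $R(\mu(c))$ for a single infinite cycle, which is Lemma~\ref{lemma:infinite cycle} --- and there no localization is needed, because the converse direction of that lemma only tracks supports via $\supp(h^{c})=\supp(h)\,c$ together with the fact that $\tau,\tau'$ fix the points in question, so it applies to an arbitrary finitely supported conjugator whether or not it preserves $S$. In short: transport by $\delta$ rather than by all of $\gamma$, and the hard step you flagged disappears; without that reduction (or a genuine proof of your localization claim), the proposal does not establish the theorem.
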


\begin{proof}
Recall $\aut(\calH_1)=\aut(\FSym_1)\cong\Sym_1$. Each automorphism $\phi$ is given by $\phi = \mu(\gamma)$ for some $\gamma \in \Sym_1$. Consider the orbits of $\supp(\gamma)$ to form a partition of $\supp(\gamma)$. Observe that $\gamma$ restricts to a cycle on each orbit. Thus we see that a cycle decomposition of $\gamma$ is well defined and so $\gamma$ can be expressed as a product of commuting cycles.  If $\gamma$ has an infinite orbit then it contains an infinite cycle $\gamma_1$ so that $\gamma$ can be written as
\begin{equation}\label{eq:magic_product}
\gamma = \gamma_1 \gamma_2.
\end{equation} We have a bijection $R[\mu(\gamma_1)]\leftrightarrow R[\mu(\gamma)]$ from Remark \ref{rmk:bijection}. By Lemma \ref{lemma:infinite cycle}, we know that $R[\mu(\gamma_1)]=\infty$, and hence $R[\mu(\gamma)]=\infty$. If all orbits of $\gamma$ are finite then we can express $\gamma$ as a product (\ref{eq:magic_product}) with a finite cycle $\gamma_1$. From Lemma \ref{lemma:finite_permutation}, we see that $R[\mu(\gamma_1)]=\infty$, and so $R[\mu(\gamma)]=\infty$ due to the same bijection as above. We have proved that $R[\phi]=\infty$ for all automorphism of $\calH_1$.
\end{proof}

\bibliographystyle{siam}
\bibliography{R_inf_ref}

\end{document}